\theoremstyle{plain}
\newtheorem{theorem}{Theorem}[section]
\newtheorem{lemma}[theorem]{Lemma}
\newtheorem{proposition}[theorem]{Proposition}
\newtheorem{assumption}{Assumption}
\theoremstyle{definition}
\newtheorem{definition}[theorem]{Definition}
\newtheorem{example}[theorem]{Example}
\theoremstyle{remark}
\newtheorem{remark}{Remark}
\begin{document}

\title{On stationarity properties of generalized Hermite-type processes}

\author{
\name{Illia~Donhauzer\textsuperscript{a}\thanks{I.~Donhauzer. Email:I.Donhauzer@latrobe.edu.au} and Andriy~Olenko\textsuperscript{a}\thanks{\Letter \ A.~Olenko. Email:a.olenko@latrobe.edu.au}}
\affil{\textsuperscript{a} La Trobe University, Melbourne, Australia}
}

\maketitle

\begin{abstract}
The paper investigates properties of generalized Hermite-type processes that arise in non-central limit theorems for integral functionals of long-range dependent random fields. The case of increasing multidimensional domain asymptotics is studied.  Three approaches to investigate properties of these processes are discussed. Contrary to the classical one-dimensional case, it is shown that for any choice of a multidimensional observation window the generalized Hermite-type process has non-stationary increments.
\end{abstract}

\begin{keywords}
Non-central limit theorem; Hermite-type processes; increments; stationary; self-similar; geometric probability
\end{keywords}

\section{Introduction}

This paper investigates properties of limit processes in  non-central limit theorems for nonlinear integral functionals \cite{ivanov1989statistical}, \cite{taqqu1979convergence}. The structure of the increments of the limit processes for different integration sets is of the interest. While stationarity of increments of self-similar stochastic processes is well investigated, the case of random fields attracted increasing attention only recently, see, for example, the discussion in~\cite{fu2019stable, makogin2015}.  This paper investigates the class of generalized Hermite-type processes obtained via asymptotics of nonlinear transformations of long-range dependent random fields.

Dobrushin and Major \cite{dobrushin1979non} considered the nonlinear functionals 
$$Y_m^N =\frac{1}{A_N}\sum_{j=(m-1)N}^{mN-1}G(\xi_j)$$ of long-range dependent Gaussian random sequences $\{\xi_j, \ j \in \mathbb{N}\}$ with correlation functions of the form $r(j)=\frac{L(j)}{j^\alpha}, \ \alpha \in (0,1),$ where $A_N$ are normalising coefficients and $L(\cdot)$ is a slowly varying function at the infinity. It was shown that the asymptotic behavior of $Y_m^N, \ N \to \infty,$ depends on the Hermite rank $\kappa$ of the non-random function $G$ and in a general case the asymptotic distribution is not Gaussian. In \cite{dobrushin1979non} and the more general case \cite{bai2014generalized}, it was proved that, if $N \to \infty,$ then under certain conditions the finite-dimensional distributions of $Y_m^N$ converge to finite-dimensional distributions of Hermite processes defined by Wiener-It\^{o} integrals \cite{dobrushin1979gaussian}. In recent years, the Malliavin calculus approach was used to obtain such results under rather general assumptions, see  \cite{azmoodeh2019} and the references therein. In \cite{taqqu1979convergence} and recently \cite{bai2015functional} a continuous version of the problem was considered by changing summation by integration and Gaussian sequences $\{\xi_j, \ j\in \mathbb{N}\}$ by a Gaussian random processes $\{\xi(x), \ x\geq 0\}.$ The averaging of a nonlinear transformation of the long-range dependent random process $\xi(x)$ over increasing intervals of $\mathbb{R}_{+}$ was considered. It was demonstrated that the finite-dimensional distributions of the integrated processes do not converge (in a general case) to the Brownian motion because of the long-range dependence of the stochastic process $\xi(x).$ Taqqu proved that the limit process $Y(t), \ t \in [0,1],$ is the fractional Brownian motion if and only if the Hermite rank $\kappa=1,$ and $Y(t)$ is non-Gaussian if $\kappa>1$. A representation of the limit process in terms of Wiener-It\^{o} integrals was obtained. The limit process $Y(t)$ belongs to the class of self-similar processes, and $Y(t)$ depends only on the Hermite rank of the function $G$ and the parameter $\alpha,$ which is the rate of a hyperbolic decay of the correlation function at the infinity. 

In \cite{dobrushin1979non} these one-dimensional results were also generalized to the multidimensional case with the summation over integer grid points of multidimensional parallelepipeds. In \cite{anh2019lse, ivanov1989statistical, leonenko2014sojourn}  the corresponding continuous multidimensional case was considered, when the integration of long-range dependent homogeneous isotropic random fields $\xi(x),$ $x \in\mathbb{R}^n,$ is over homothetic transformations $\Delta(r t^{1/n}), \ t\in[0,1], \ r\to\infty,$ of multidimensional observation windows $\Delta \in \mathbb{R}^n$. Similar to the one-dimensional case, it was shown that for $\kappa>1$ the limit process is not Gaussian and is self-similar. It was demonstrated that the limit processes are different for different $\Delta.$ We call them as generalized Hermite-type processes.

This paper studies the limit processes $Y(t).$  It is well-known that in the one-dimensio\-nal case $n=1$ these limit processes have stationary increments. It is commonly assumed that $Y(t)$ possesses the same property for the case of integral functionals over multidimensional windows. However, we prove that in the multidimensional case $n> 1$ the limit processes always have non-stationary increments. 

This paper shows interesting relationships between the increments of the limit processes and geometric probabilities. Crofton's mean value formula \cite{baddeley1977integrals, kendall1998stochastic} for an average function of  distances of points inside a growing domain is used. This formula is an important tool that finds various applications in differential geometry, shape analysis, spatial statistics, just to mention few, see \cite{baddeley1977integrals, kendall1998stochastic, nickolas2011distance} and the references therein. Crofton's formula connects a differential of a functional of the average distance $M(x_1,..., x_k)$  between $k$ uniformly distributed points $x_i, i = 1,...,k,$ inside the growing domain $\Delta(t)$ and geometric properties of the surface of this domain $\partial \Delta(t)$. 

Moreover, it will be seen that variances of increments of the limit processes can be defined as integrals of positive-definite functions. Such integrals are of the interest in analysis as different applications require optimal estimators of these integrals,   see \cite{gaal2018integral, phillips2019extension}. The integral settings let use more general classes of positive definite functions than the classical definition based on quadratic forms and finite sums, see \cite{phillips2019extension}. 

To prove the results the paper employs three different methodologies based on sto\-chastic integral representation of the Hermite-type processes, Crofton's mean value formula and integrals of potential kernels. The obtained results show interesting links between stochastics and differential geometry and can be used in other applications.

The paper is organized as follows. Section 2 provides the main definitions and notations required in the following sections. Non-central limit theorems for random processes and properties of the limit processes are given in Section 3. Section 4 provides non-central limit theorem for random fields and properties of the corresponding limit process. Numerical studies confirming the obtained theoretical results are presented in Section 5.

In what follows we use the symbol $C$ to denote constants which are not important for our discussion. Moreover, the same symbol $C$ may be used for different constants appealing in the same proof. By $||\cdot||$ we denote the norm in $n$\mbox{-}dimensional Euclidean space, and $|\cdot|$ stands for the Lebesgue measure of sets in the same space. We use the notation $A+y = \{ x \in \mathbb{R}^n: x = z + y, z \in A \}.$

\section{Premilinaries}

This section states the main definitions and notations used in this paper.

\begin{definition} A random field $\xi(x), \ x \in \mathbb{R}^n ,$ is called strictly homogeneous and isotro\-pic, if finite-dimensional distributions of $\xi(x)$ are invariant with respect to the groups of motion and rotation transformations 
$$P\big(\xi(x_1)<a_1,...,\xi(x_k)<a_k\big) = P\big(\xi(Ax_1+h)<a_1,...,\xi(Ax_k+h)<a_k\big)$$  for all rotation transformations $A,$ vectors $h ,x_1,x_2,..,x_k\in\mathbb{R}^n$ and $a_1,a_2,..,a_k\in\mathbb{R}.$
\end{definition}

\begin{definition} A random field $\xi(x),\ x\in\mathbb{R}^n,$ is self-similar with parameter $H$ if $\xi(ax) \overset{d}{=} a^H\xi(x),$  where $\overset{d}{=}$ denotes the equality of finite-dimensional distributions.
\end{definition}

\begin{definition} A measurable function $L:(0,\infty) \to (0,\infty)$ is called slowly varying at the infinity if for all $\lambda>0$
$$\lim_{t \to + \infty} \frac{L(\lambda t )}{L(t)} = 1.$$  
\end{definition}

\begin{definition} The function
$$H_m(u)=(-1)^\kappa e^{u^2/2}\frac{d^m}{du^m}e^{-\frac{u^2}{2}}$$ is a Hermite polynomial of order $m$. 

The first few Hermite polynomials are 
$H_0(u) = 1, \ H_1(u) = u, \ H_2(u) = u^2-1.$
\end{definition}

Let $\phi(x) = exp\{-u^2/2\}/\sqrt{2\pi}$ be a probability density of the standard Gaussian random variable. Denote by $L_2(\mathbb{R}, \phi(u)du)$ a Hilbert space of Lebesgue measurable functions $G:\mathbb{R} \to \mathbb{R}$ such that $\int_{\mathbb{R}}G^2(u)\phi(u)du<\infty.$ 

It is known that the Hermite polynomials form a complete orthogonal system in the space $L_2(\mathbb{R}, \phi(u)du)$, i.e.
$$\int_{\mathbb{R}}H_{m_1}(u)H_{m_2}(u)\phi(u)du = \delta_{m_1}^{m_2}m_1!, \ m_1,m_2 = 0,1,...,$$ where $\delta_{m}^q$ is a Kronecker delta.

\begin{definition} The Hermite rank of a function $G \in L_2(\mathbb{R}, \phi(u)du)$ is the index $\kappa$ of the first non-zero coefficient $a_\kappa$ in the expansion of $G$ into the Hermite polynomials
$$G(u) = \sum_{m=\kappa}^\infty a_mH_m(u).$$
\end{definition}

\begin{assumption}
\label{assump1}
 Let $\xi(x), \ x \in \mathbb{R}^n,$ be a mean square continuous homogeneous isotro\-pic Gaussian random field with $E\xi(x)=0$ and a covariance function 
$$B(r) = E\big(\xi(0) \xi(x)\big) = \frac{L(||x||)}{||x||^{\alpha}},$$ where $\alpha \in (0,n),\ r=||x||$ and $L(\cdot)$ is a slowly varying function at the infinity. 

\end{assumption}

If Assumption~\ref{assump1} is satisfied, then by properties of the Hermite polynomials of Gaussian random fields 
$$EH_{m_1}(\xi(x)) = 0,$$
$$EH_{m_1}(\xi(x_1))H_{m_2}(\xi(x_2)) = \delta_{m_1}^{m_2}m_1!B^{m_1}(||x_1-x_2||)$$ $$ = \frac{\delta_{m_1}^{m_2}m_1! L^{m_1}(||x_1-x_2||)}{||x_1-x_2||^{m_1\alpha}}, \ x_1,x_2\in\mathbb{R}^n.$$

It follows from the Abelian and Tauberian theory, see \cite{leonenko2013tauberian}, that Assumption \ref{assump1} on the covariance function can be replaced by analogous conditions on the spectral density of $\xi(x).$ For example, if $L(\cdot)\equiv const,$ one can use

\begin{assumption}
\label{assump2}
 Let $\xi(x), x \in \mathbb{R}^n,$ be a mean square continuous homogeneous isotro\-pic Gaussian random field with the spectral density $f(||\lambda||) = h(||\lambda||)/||\lambda||^{n-\alpha}, \ \lambda \in \mathbb{R}^n,$ such that it holds $h(\rho)/\rho^{1-\alpha} \in L_1(\mathbb{R}_+),\ \rho\in[0,\infty),$ $\alpha \in (0,n)$ and $h(\rho)$ is a continuous function in a neighborhood of the origin, $h(0) \neq 0$ and  $h(\rho)$ is bounded on $\mathbb{R}_+.$  
\end{assumption}

\begin{lemma}\label{lemma1} {\rm \cite[Lemma 2.10.1]{ivanov1989statistical},  \cite[\S 5]{leonenko2013tauberian}}  If Assumption {\rm\ref{assump2}} is satisfied, then the correlation function of the random field $\xi(x), \ x\in\mathbb{R}^n,$ has the following asymptotic behavior
$$B(r) \sim \frac{h(0)c_1(n, \alpha)(1+O(1))}{r^\alpha},  \ r \to \infty,$$
where $c_1(n, \alpha) = 2^{\alpha}\pi^{n/2}\Gamma(\alpha/2)/\Gamma((n-\alpha)/2).$
\end{lemma}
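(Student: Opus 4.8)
The statement is an Abelian-type result connecting the hyperbolic decay of the correlation function with the behaviour of the spectral density near the origin, so the plan is to compute the Fourier transform of $f$ in polar coordinates and to isolate the contribution of $h(0)$. Since $\xi(x)$ is mean square continuous, homogeneous and isotropic, its covariance function has the spectral representation $B(r)=\int_{\mathbb{R}^{n}}e^{i\langle\lambda,x\rangle}f(\|\lambda\|)\,d\lambda$ with $r=\|x\|$, and the conditions $\alpha\in(0,n)$, boundedness of $h$, and $h(\rho)/\rho^{1-\alpha}\in L_{1}(\mathbb{R}_{+})$ guarantee that $\int_{\mathbb{R}^{n}}f(\|\lambda\|)\,d\lambda<\infty$, so $B(\cdot)$ is a well-defined continuous function. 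Using the standard reduction of the Fourier transform of a radial function to a Hankel transform, I would write
\begin{equation*}
B(r)=\frac{(2\pi)^{n/2}}{r^{(n-2)/2}}\int_{0}^{\infty}f(\rho)\,J_{(n-2)/2}(r\rho)\,\rho^{n/2}\,d\rho ,
\end{equation*}
where $J_{\nu}$ is the Bessel function of the first kind, and after substituting $f(\rho)=h(\rho)\rho^{-(n-\alpha)}$ and rescaling $u=r\rho$ all powers of $r$ collapse into one factor,
\begin{equation*}
r^{\alpha}B(r)=(2\pi)^{n/2}\int_{0}^{\infty}h(u/r)\,u^{\alpha-n/2}\,J_{(n-2)/2}(u)\,du .
\end{equation*}

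The next step is to split $h(u/r)=h(0)+\bigl(h(u/r)-h(0)\bigr)$. For the first summand I would use the classical Weber--Schafheitlin integral $\int_{0}^{\infty}t^{\mu-1}J_{\nu}(t)\,dt=2^{\mu-1}\Gamma\!\bigl((\mu+\nu)/2\bigr)/\Gamma\!\bigl((\nu-\mu)/2+1\bigr)$ with $\mu=\alpha-n/2+1$ and $\nu=(n-2)/2$; then $(\mu+\nu)/2=\alpha/2$ and $(\nu-\mu)/2+1=(n-\alpha)/2$, so the integral equals $2^{\alpha-n/2}\Gamma(\alpha/2)/\Gamma((n-\alpha)/2)$ and, multiplying by $(2\pi)^{n/2}=2^{n/2}\pi^{n/2}$, this summand contributes exactly $2^{\alpha}\pi^{n/2}h(0)\Gamma(\alpha/2)/\Gamma((n-\alpha)/2)=h(0)\,c_{1}(n,\alpha)$ (equivalently, this is the Fourier transform of the Riesz kernel $h(0)\|\lambda\|^{-(n-\alpha)}$ scaled by $r^{\alpha}$). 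For the remainder
\begin{equation*}
R(r):=(2\pi)^{n/2}\int_{0}^{\infty}\bigl(h(u/r)-h(0)\bigr)u^{\alpha-n/2}J_{(n-2)/2}(u)\,du
\end{equation*}
I would show $R(r)\to0$ by splitting $\int_{0}^{\infty}=\int_{0}^{\delta r}+\int_{\delta r}^{\infty}$: on $(0,\delta r)$ one uses $|h(u/r)-h(0)|<\varepsilon$ by continuity of $h$ at the origin together with $|J_{(n-2)/2}(u)|\le C\min\bigl(u^{(n-2)/2},u^{-1/2}\bigr)$, which makes $u^{\alpha-n/2}J_{(n-2)/2}(u)$ integrable near $0$ since $\alpha>0$; on $(\delta r,\infty)$ one returns to the variable $\rho=u/r$ and estimates the tail by boundedness of $h$ and $\int_{\delta}^{\infty}h(\rho)\rho^{\alpha-1}\,d\rho<\infty$, the latter being absorbed by sending $\delta\to0$ at the end. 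Combining the two summands gives $r^{\alpha}B(r)\to h(0)c_{1}(n,\alpha)$, that is $B(r)=h(0)c_{1}(n,\alpha)r^{-\alpha}(1+o(1))$ as $r\to\infty$, which is the claim.

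The main obstacle is the rigorous treatment of the oscillatory tail $\int^{\infty}u^{\alpha-n/2}J_{(n-2)/2}(u)\,du$: for $\alpha\ge(n+1)/2$ it is not absolutely convergent and for $\alpha$ close to $n$ not even conditionally, so the Weber--Schafheitlin identity must be read in the sense of tempered distributions and the factor $h(u/r)$, controlled via $h(\rho)/\rho^{1-\alpha}\in L_{1}$, provides the regularisation that makes $r^{\alpha}B(r)$ converge. An alternative route that bypasses Bessel asymptotics is to work directly in $\mathbb{R}^{n}$: write $f(\|\lambda\|)=h(0)\|\lambda\|^{-(n-\alpha)}+g(\|\lambda\|)$, apply the distributional identity $\mathcal{F}\bigl[\|\lambda\|^{-(n-\alpha)}\bigr](x)=c_{1}(n,\alpha)\|x\|^{-\alpha}$ to the first term, and use an Abelian argument in the spirit of \cite[\S5]{leonenko2013tauberian} to show that the Fourier transform of the less singular remainder $g$ decays strictly faster than $\|x\|^{-\alpha}$.
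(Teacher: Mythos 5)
The paper does not prove this lemma; it is quoted verbatim from \cite[Lemma 2.10.1]{ivanov1989statistical} and \cite[\S 5]{leonenko2013tauberian}, so there is no in-paper argument to compare against. Your plan is the standard one used in those references: reduce the radial Fourier transform to a Hankel transform, rescale so that $r^{\alpha}B(r)=(2\pi)^{n/2}\int_{0}^{\infty}h(u/r)u^{\alpha-n/2}J_{(n-2)/2}(u)\,du$, split off $h(0)$, and evaluate the main term by Weber--Schafheitlin. Your bookkeeping is right: the exponent of $r$ does collapse to $-\alpha$, and with $\mu=\alpha-n/2+1$, $\nu=(n-2)/2$ the integral gives $2^{\alpha-n/2}\Gamma(\alpha/2)/\Gamma((n-\alpha)/2)$, which times $(2\pi)^{n/2}$ is exactly $c_{1}(n,\alpha)$ (equivalently, the Riesz-kernel identity $\mathcal{F}[\|\lambda\|^{-(n-\alpha)}]=c_{1}(n,\alpha)\|x\|^{-\alpha}$).

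The gap is in the remainder estimate, and it is larger than you indicate. On $(0,\delta r)$ you bound $|h(u/r)-h(0)|<\varepsilon$ and invoke integrability of $u^{\alpha-n/2}J_{(n-2)/2}(u)$, but since $|J_{(n-2)/2}(u)|\asymp u^{-1/2}$ at infinity the integrand is absolutely integrable on all of $(0,\infty)$ only when $\alpha<(n-1)/2$ (not $(n+1)/2$ as you state; conditional convergence fails at $(n+1)/2$, absolute convergence already at $(n-1)/2$). For $\alpha\ge(n-1)/2$ the bound $\varepsilon\int_{0}^{\delta r}|u^{\alpha-n/2}J_{(n-2)/2}(u)|\,du$ diverges as $r\to\infty$, and the same crude Bessel bound makes your tail estimate on $(\delta r,\infty)$ carry a factor $r^{\alpha-(n-1)/2}\to\infty$. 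Since the lemma is needed for all $\alpha\in(0,n/\kappa)$, this range cannot be excluded. To close the argument one must actually use the oscillation of $J_{(n-2)/2}$ --- e.g.\ integrate by parts via $\frac{d}{du}\bigl(u^{\nu+1}J_{\nu+1}(u)\bigr)=u^{\nu+1}J_{\nu}(u)$, or apply a Dirichlet-test/second-mean-value argument --- and this is precisely where the hypotheses on $h$ (boundedness, continuity at $0$, $h(\rho)\rho^{\alpha-1}\in L_{1}$) are consumed in the cited proofs. You correctly sense the difficulty and point to the distributional reading of Weber--Schafheitlin and to the alternative decomposition $f=h(0)\|\lambda\|^{-(n-\alpha)}+g$, but neither route is carried out, so as written the proof is complete only for $\alpha<(n-1)/2$.
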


Let the function $v(x,t),$ $x\in \Delta(t),$ $t>0,$ represent the velocity of change of the sets $\Delta(t)$ at point $x$ at moment $t.$ $\partial \Delta(t)$ will denote the boundary of the set $\Delta(t).$

The following result is the celebrated Crofton's formula.

\begin{theorem} {\rm{\cite[Theorem 1.7]{kendall1998stochastic}}}\label{croft}
Let $\{\Delta(t) \}, \ t \in [0,\infty),$ be a family of compact subsets of $\mathbb{R}^n$ that are smoothly changing in the sense that the graph $\Gamma = \{(x,t): x \in \Delta(t) \}$ is a twice continuously differentiable $n+1$ dimensional embedded manifold with a boundary in $\mathbb{R}^n\times\mathbb{R}.$  Consider 
$M(t) = Ef(X_1,...,X_m),$ where $X_1,...,X_m$ are independent uniformly distributed in $\Delta(t)$ random points and $f:\mathbb{R}^m\to\mathbb{R}$ is a symmetric function of its arguments. 

Then almost everywhere $M$ has the derivative
$$\frac{d}{dt}M(t) = n\frac{\frac{d}{dt}|\Delta(t)|}{|\Delta(t)|}(M_1(t)-M(t)),$$  where $M_1(t)=Ef(Y,X_2,...,X_m),$  and $Y$ is distributed on $\partial \Delta(t)$ with a density proportional to $v(x,t).$ 
\end{theorem}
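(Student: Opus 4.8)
The plan is to deduce Crofton's identity from the Reynolds transport theorem applied to the moving region $\Delta(t)^m\subset\mathbb{R}^{nm}$, together with the elementary quotient rule.

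First I would write the mean value as a ratio. With $V(t):=|\Delta(t)|$ and
\[
I(t):=\int_{\Delta(t)^m} f(x_1,\dots,x_m)\,dx_1\cdots dx_m ,
\]
the uniform law on $\Delta(t)$ gives $M(t)=I(t)/V(t)^m$. Since $\Gamma$ is a twice continuously differentiable manifold with boundary, for almost every $t$ the functions $V$ and $I$ are differentiable and the normal velocity $v(\cdot,t)$ on $\partial\Delta(t)$ is well defined; at such $t$ the quotient rule yields
\[
M'(t)=\frac{I'(t)}{V(t)^m}-m\,\frac{V'(t)}{V(t)}\,M(t).
\]

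Next I would compute $I'(t)$ by the transport theorem. The boundary $\partial(\Delta(t)^m)$ is, up to a set of vanishing $(nm-1)$-dimensional measure, the disjoint union over $i=1,\dots,m$ of the strata with $\partial\Delta(t)$ in the $i$-th copy of $\mathbb{R}^n$ and $\Delta(t)$ in the remaining $m-1$ copies. As $f$ carries no $t$-dependence, the transport theorem identifies $I'(t)$ with the flux of $f$ across this moving boundary; on the $i$-th stratum the outer unit normal lies in the $i$-th block, the boundary speed along it equals $v(y,t)$ where $y\in\partial\Delta(t)$ is the $i$-th coordinate, and the surface element factors as $dS(y)\prod_{j\ne i}dx_j$. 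By symmetry of $f$ all $m$ strata contribute equally, so
\[
I'(t)=m\int_{\Delta(t)^{m-1}}\!\int_{\partial\Delta(t)} f(y,x_2,\dots,x_m)\,v(y,t)\,dS(y)\,dx_2\cdots dx_m .
\]
Then I would recognise the boundary integral probabilistically. Taking $m=1$ and $f\equiv 1$ in the same transport identity gives $V'(t)=\int_{\partial\Delta(t)} v(y,t)\,dS(y)$, whence $v(\cdot,t)/V'(t)$ is a probability density on $\partial\Delta(t)$ --- precisely the law of $Y$. Therefore the inner double integral above equals $V'(t)\,V(t)^{m-1}M_1(t)$, so $I'(t)=m\,V'(t)\,V(t)^{m-1}M_1(t)$. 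Substituting into the previous display, the two terms collapse and
\[
M'(t)=m\,\frac{V'(t)}{V(t)}\bigl(M_1(t)-M(t)\bigr),
\]
which is Crofton's identity (with prefactor equal to the number $m$ of averaged points).

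The step I expect to be the main obstacle is the rigorous justification of the transport (generalised Leibniz) rule for the moving domains $\Delta(t)$ and $\Delta(t)^m$ under only the assumed $C^2$ smoothness of $\Gamma$: one must locally straighten the time-dependent boundary, control the lower-dimensional edges where the strata of $\partial(\Delta(t)^m)$ meet, and handle the ``for almost every $t$'' caveat with care --- the field $v(\cdot,t)$ need not exist at every instant, and if $\{\Delta(t)\}$ is not monotone then $v$ changes sign, so ``density proportional to $v(x,t)$'' must be read with the appropriate normalisation on the portion of $\partial\Delta(t)$ where $v(\cdot,t)>0$. Apart from this analytic core, the argument uses only the quotient rule and Fubini's theorem.
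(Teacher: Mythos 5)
The paper gives no proof of this statement---it is quoted from Kendall and van Lieshout [Theorem 1.7] without argument---so there is nothing internal to compare against; your derivation via the Reynolds transport theorem applied to $\Delta(t)^m$ is the standard route (essentially Baddeley's original argument of differentiating an integral over a moving domain and reading the boundary flux probabilistically), and each step (the quotient rule for $M=I/V^m$, the stratification of $\partial(\Delta(t)^m)$ into $m$ equal contributions by symmetry of $f$, and the identification $V'(t)=\int_{\partial\Delta(t)}v\,dS$ that normalises the law of $Y$) is sound, with the genuine analytic technicalities honestly flagged rather than hidden. One substantive point your computation exposes: you obtain the prefactor $m$, the number of averaged points, whereas the statement as transcribed in the paper has $n$, the ambient dimension. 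Your version is the correct one and agrees with the cited source, where the number of points happens to be denoted $n$; the paper's transcription introduces a clash with its own use of $n$ for the dimension of $\mathbb{R}^n$ (note also that $f$ should map $(\mathbb{R}^n)^m\to\mathbb{R}$, not $\mathbb{R}^m\to\mathbb{R}$). The discrepancy is harmless downstream because the formula is only applied with $m=2$ points in $\mathbb{R}^2$, where $m=n=2$, but your derivation correctly identifies which constant belongs there.
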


\section{One-dimensional case of functionals of stochastic processes}

This section reviews non-central limit theorems for stochastic processes and discusses stationarity properties of the corresponding limit processes $Y(t), \ t\in[0,1].$

Let a function $G:\mathbb{R} \to \mathbb{R}$ has Hermite rank $\kappa$ and satisfy conditions $EG(X)=0,$ $EG^2(X)< \infty,$ where $X$ is the standard Gaussian random variable.

In \cite{taqqu1979convergence}, for a suitably chosen function $e(u) , \ u\in\mathbb{R},$ such that $e(u) \sim u^{H_0-3/2}L(u),\ u\to\infty,$ where $L(u)$ is slowly varying function at the infinity and $1-\frac{1}{\kappa}<H_0<1$, the process $X(s),\ s \in\mathbb{R},$ was defined by

$$X(s) =\frac{1}{\sigma} \int_{\mathbb{R}} e(s-\xi)dW(\xi),\ s\in \mathbb{R},$$ where $\sigma^2 = \int_{\mathbb{R}}e^2(u)du$ and $W(\cdot)$ is the standard Gaussian white noise measure satisfying $EW(A)=0$ and
$EW^2(A)=|A|$ for Borel sets $A$ of finite Lebesgue measure $|A|$. The process $X(s)$ is 
Gaussian, stationary, satisfies $EX(s)=0,$ $EX^2(s)= 1$ and 

$$EX(s)X(s+x)\sim Cx^{2H_0-2}L^2(u)$$ as $x\to \infty,$ where $C$ is a positive constant.

Then the following limit theorem holds true.

\begin{theorem}{\rm{\cite[Theorem 5.5]{taqqu1979convergence}}} Let $d(r) \sim E\left(\int_{0}^{r} G(X(s))ds\right)^2,$ $r\to\infty.$ Then, for $r\to\infty,$ the finite-dimensional distributions of the process 
$$\frac{1}{d(r)}\int_{0}^{tr}G(X(s))ds,\ t\in [0,1],$$ 
converge weakly to the finite-dimensional distributions of the process 
$$Y(t) = K(\kappa, H_0)\int_{\mathbb{R}}\int_{-\infty}^{\xi_1}...\int_{-\infty}^{\xi_{\kappa-1}}\int_0^t\prod_{i=0}^\kappa\bigg( (s-\xi_i)^{H_0-3/2}I(\xi_i < s) \bigg)ds dW(\xi_{\kappa})...dW(\xi_1),$$  where  $K(\kappa, H_0) $ is a constant.

\end{theorem}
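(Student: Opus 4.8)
The plan is to follow the classical scheme for non-central limit theorems: expand $G$ in Hermite polynomials, reduce to the leading term, pass to the multiple Wiener--It\^o representation, rescale, and prove $L_2$-convergence of the integral kernels. \textbf{Reduction to the leading Hermite term.} Write $G(u)=\sum_{m=\kappa}^\infty a_m H_m(u)$ and split $\int_0^{tr}G(X(s))\,ds$ into $a_\kappa\int_0^{tr}H_\kappa(X(s))\,ds$ plus the tail $\sum_{m>\kappa}a_m\int_0^{tr}H_m(X(s))\,ds$. By the orthogonality relations for Hermite polynomials of a Gaussian field, $EH_{m_1}(X(s_1))H_{m_2}(X(s_2))=\delta_{m_1}^{m_2}m_1!\,\rho^{m_1}(s_1-s_2)$ with $\rho(x)=EX(0)X(x)$, so
$$E\Big(\int_0^{tr}H_m(X(s))\,ds\Big)^2 = m!\int_0^{tr}\!\!\int_0^{tr}\rho^m(s_1-s_2)\,ds_1\,ds_2 .$$
Since $\rho(x)\sim Cx^{2H_0-2}L^2(x)$ and $|\rho|\le1$, this double integral is regularly varying, of a strictly smaller order for $m>\kappa$ than for $m=\kappa$; dividing by $d^2(r)$ (which is asymptotically $a_\kappa^2$ times the $m=\kappa$ integral) shows the tail is $o(1)$ in $L_2$, uniformly in $t\in[0,1]$. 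This reduces the problem to the process $a_\kappa d^{-1}(r)\int_0^{tr}H_\kappa(X(s))\,ds$.

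\textbf{Spectral representation and rescaling.} From $X(s)=\sigma^{-1}\int_{\mathbb{R}}e(s-\xi)\,dW(\xi)$ with $\|\sigma^{-1}e(s-\cdot)\|_{L_2}=1$ and the It\^o formula for Hermite polynomials of Gaussian integrals,
$$H_\kappa(X(s)) = \sigma^{-\kappa}\int_{\mathbb{R}^\kappa}{}'\prod_{i=1}^\kappa e(s-\xi_i)\,dW(\xi_1)\cdots dW(\xi_\kappa),$$
where $\int'$ denotes the multiple integral over the off-diagonal region. A stochastic Fubini argument then gives $\int_0^{tr}H_\kappa(X(s))\,ds=\int_{\mathbb{R}^\kappa}'g_{r,t}(\xi_1,\dots,\xi_\kappa)\,dW(\xi_1)\cdots dW(\xi_\kappa)$ with $g_{r,t}(\xi_1,\dots,\xi_\kappa)=\sigma^{-\kappa}\int_0^{tr}\prod_{i=1}^\kappa e(s-\xi_i)\,ds$. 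Substituting $\xi_i\mapsto r\xi_i$, $s\mapsto rv$, using the self-similarity $W(r\,\cdot)\overset{d}{=}r^{1/2}W(\cdot)$ of white noise, the regular variation $e(ru)\sim r^{H_0-3/2}L(r)\,u^{H_0-3/2}$, and the explicit asymptotics of $d(r)$, one obtains $a_\kappa d^{-1}(r)\int_0^{tr}H_\kappa(X(s))\,ds\overset{d}{=}\int_{\mathbb{R}^\kappa}'g^{(*)}_{r,t}\,dW^{\otimes\kappa}$ with a rescaled kernel $g^{(*)}_{r,t}$ converging pointwise to $K(\kappa,H_0)\int_0^t\prod_{i=1}^\kappa\big((s-\xi_i)^{H_0-3/2}I(\xi_i<s)\big)\,ds$, which is the (symmetrized form of the) kernel of $Y(t)$ in the statement; the constant $K(\kappa,H_0)$ is fixed by the normalization.

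\textbf{Kernel convergence in $L_2(\mathbb{R}^\kappa)$.} This is the main technical point. Using Potter's bounds for the slowly varying factor $L$, one constructs an integrable majorant for $|g^{(*)}_{r,t}|^2$ valid for all large $r$, controlling both the singularities on the hyperplanes $\xi_i=s$ (where $H_0-3/2\in(-3/2,-1/2)$) and the behavior at infinity; the cancellation making the $\kappa$-fold product square integrable is exactly where the admissible range of $H_0$ enters. Dominated convergence then yields $g^{(*)}_{r,t}\to g^{(*)}_t$ in $L_2(\mathbb{R}^\kappa)$, where $g^{(*)}_t$ is the kernel of $Y(t)$.

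\textbf{Conclusion and main obstacle.} By the isometry of multiple Wiener--It\^o integrals, $L_2$-convergence of kernels implies $L_2$- and hence distributional convergence of the integrals, so $a_\kappa d^{-1}(r)\int_0^{tr}H_\kappa(X(s))\,ds\to Y(t)$ for each $t$. For finite-dimensional distributions, fix $t_1,\dots,t_p\in[0,1]$ and $\lambda_1,\dots,\lambda_p\in\mathbb{R}$; the linear combination $\sum_j\lambda_j d^{-1}(r)\int_0^{t_jr}G(X(s))\,ds$ is, up to an $L_2$-negligible remainder, the $\kappa$-fold integral with kernel $\sum_j\lambda_j g^{(*)}_{r,t_j}$, which converges in $L_2(\mathbb{R}^\kappa)$ to $\sum_j\lambda_j g^{(*)}_{t_j}$; the isometry argument together with the Cram\'er--Wold device finishes the proof. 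I expect the hard part to be the uniform integrable domination in Step 3 (kernel convergence): handling the slowly varying function simultaneously with the near-diagonal singularities requires careful Potter-type estimates, and the uniformity in $t$ of the tail estimate in Step 1 also needs attention.
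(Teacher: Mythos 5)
The paper does not prove this theorem: it is quoted verbatim from Taqqu's 1979 paper (Theorem 5.5 there), so there is no in-paper proof to compare against. Your outline --- Hermite expansion and reduction to the $\kappa$-th chaos, Wiener--It\^o representation of $H_\kappa(X(s))$ via the It\^o formula, rescaling using self-similarity of the white noise and regular variation of $e(\cdot)$, $L_2$-convergence of the rescaled kernels via Potter-type domination, and Cram\'er--Wold --- is precisely the classical argument of the cited source, and as a sketch it is sound. One small mismatch with the statement as printed: you (correctly, following Taqqu) treat $d^2(r)$ as the asymptotic variance and normalize by $d(r)$, whereas the statement here defines $d(r)$ itself as the second moment yet still divides by $d(r)$; that, like $\prod_{i=0}^{\kappa}$ in place of $\prod_{i=1}^{\kappa}$, is a typo in the quoted statement rather than a flaw in your argument.
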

The process $Y(t), \ t\in[0,1],$ is called the Hermite process. If the Hermite rank $\kappa=1,$ then $Y(t)$ is the fractional Brownian motion \cite{mishura2008stochastic}.

It is well-known that the limit process has the following property for any $\kappa \geq 1,$ but for the completeness of the exposition we will present the proof which uses the approach that is different from the main result in the following section.

\begin{proposition} The process $Y(t), \ t\in [0,1],$ is self-similar with stationary increments.
\end{proposition}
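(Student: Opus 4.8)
The plan is to read off both properties from scaling and translation invariance of the Gaussian white noise measure $W$, working directly with the Wiener--It\^o integral representation of $Y$. Abbreviate the kernel by
$$Q_t(\xi_1,\dots,\xi_\kappa)=\int_0^t\prod_{i=1}^{\kappa}\big((s-\xi_i)^{H_0-3/2}I(\xi_i<s)\big)\,ds ,$$
and let $D=\{(\xi_1,\dots,\xi_\kappa):\xi_\kappa<\xi_{\kappa-1}<\dots<\xi_1\}$, so that $Y(t)=K(\kappa,H_0)\int_{D}Q_t(\xi_1,\dots,\xi_\kappa)\,dW(\xi_\kappa)\cdots dW(\xi_1)$. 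I would use two elementary facts: for every $a>0$ the set function $A\mapsto a^{-1/2}W(aA)$ is again a standard Gaussian white noise on $\mathbb{R}$, and for every $h\in\mathbb{R}$ the set function $A\mapsto W(A+h)$ is again a standard Gaussian white noise (both follow at once from $EW(A)W(B)=|A\cap B|$ together with $|aA|=a|A|$ and translation invariance of the Lebesgue measure). Since the vector $(Y(t_1),\dots,Y(t_k))$ is obtained by applying a fixed measurable map (the $\kappa$-fold Wiener--It\^o integral against the kernels $Q_{t_1},\dots,Q_{t_k}$) to the single white noise $W$, replacing $W$ by a white noise of the same law yields a vector with the same distribution; this is the device that upgrades equalities of integrands to equalities of finite-dimensional distributions of the whole process.

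\textbf{Self-similarity.} Fix $a>0$ and $t_1,\dots,t_k\in[0,1]$. In $Y(at_j)$ I would substitute $s=au$ in the $ds$-integration and $\xi_i=a\eta_i$ in the stochastic integration; the latter is admissible because the domain $D$ is invariant under $\xi\mapsto a\xi$. Each of the $\kappa$ factors $(s-\xi_i)^{H_0-3/2}$ produces $a^{H_0-3/2}$, the $ds$-integration produces $a$, and $dW(a\eta_i)=a^{1/2}\,d\widetilde W(\eta_i)$ with $\widetilde W:=a^{-1/2}W(a\,\cdot\,)$ a standard white noise produces $a^{\kappa/2}$. Collecting exponents and using that $\widetilde W$ has the law of $W$ simultaneously for all $t_j$,
$$\big(Y(at_1),\dots,Y(at_k)\big)\overset{d}{=}a^{H}\big(Y(t_1),\dots,Y(t_k)\big),\qquad H=1+\kappa\Big(H_0-\tfrac32\Big)+\tfrac{\kappa}{2}=1-\kappa(1-H_0),$$
and $H\in(0,1)$ because $1-\tfrac1\kappa<H_0<1$. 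As $t_1,\dots,t_k$ are arbitrary, $Y$ is self-similar with index $H$.

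\textbf{Stationary increments.} Fix $h\ge0$. By linearity $Y(t+h)-Y(h)=K(\kappa,H_0)\int_D\big(Q_{t+h}-Q_h\big)\,dW(\xi_\kappa)\cdots dW(\xi_1)$, and $Q_{t+h}-Q_h=\int_h^{t+h}\prod_{i=1}^{\kappa}\big((s-\xi_i)^{H_0-3/2}I(\xi_i<s)\big)\,ds$. Substituting $s=u+h$ and $\xi_i=\eta_i+h$ (here $D$ is invariant under the translation $\xi\mapsto\xi+h$) turns this kernel into $Q_t(\eta_1,\dots,\eta_\kappa)$ and turns $dW(\eta_i+h)$ into $d\widehat W(\eta_i)$ with $\widehat W:=W(\,\cdot\,+h)$ a standard white noise. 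Carrying out this single translation of $W$ simultaneously for $t_1,\dots,t_k$ gives $\big(Y(t_1+h)-Y(h),\dots,Y(t_k+h)-Y(h)\big)\overset{d}{=}\big(Y(t_1),\dots,Y(t_k)\big)$, which is stationarity of the increments.

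\textbf{Main obstacle.} Every step above is bookkeeping except one genuinely technical point: justifying the change of variables \emph{inside} the stochastic integral, i.e.\ that for an affine bijection $T$ of $\mathbb{R}$ with constant Jacobian $c$ one has $\int (g\circ T^{-1})\,dW(T\,\cdot\,)\cdots=c^{-1}\int g\,dW\cdots$ as random variables, with $c^{-1/2}W(T\,\cdot\,)$ again a standard white noise. I would establish this first for simple (step-function) integrands, where the $\kappa$-fold Wiener--It\^o integral is an explicit finite sum of products of white-noise increments, and then extend to general $Q_t\in L_2(D)$ by the It\^o isometry $E\big(\int_D g\,dW(\xi_\kappa)\cdots dW(\xi_1)\big)^2=\int_D g^2$; the membership $Q_t\in L_2(D)$ (needed already for the definition of $Y$) is exactly what makes the limiting argument go through. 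Once this is in place, the invariance of the simplex $D$ under dilations and translations together with the accounting of powers of $a$ completes the proof.
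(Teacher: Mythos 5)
Your proof is correct and follows essentially the same route as the paper: stationarity of increments is obtained via the translation change of variables $s\mapsto s-h$, $\xi_i\mapsto\xi_i-h$ inside the Wiener--It\^o integral (the paper does this for a single increment $Y(t_1)-Y(t_2)$ with $t_2$ playing the role of your $h$), and self-similarity follows from the scaling property of the white noise. The only differences are matters of detail: you carry out the scaling computation explicitly and record the exponent $H=1-\kappa(1-H_0)$, where the paper merely asserts self-similarity from the self-similarity of $W$, and you flag the approximation-by-simple-functions justification of the change of variables in the stochastic integral, which the paper leaves implicit.
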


\begin{proof}

The Gaussian white noise $W(\cdot)$ is a self-similar random measure. Thus, the process $Y(t), \ t\in[0,1],$ is self-similar for all $\kappa$ and $H_0$. Moreover, the stationarity of increments for all $\kappa$ and $H_0$ follows from the transformations below.

Without loss of generality let $t_1 > t_2,$ $t_1,t_2\in[0,1].$  Then $Y(t_1) - Y(t_2)$ equals in distribution to
$$ K(\kappa,H_0) \int_{-\infty}^{\infty} \int_{-\infty}^{\xi_1}...\int_{-\infty}^{\xi_{\kappa-1}}\int_{0}^{t_1}\prod_{i=1}^{\kappa}\bigg((s-\xi_i)^{H_0-\frac{3}{2}}I(\xi_i<s)\bigg)dsdW(\xi_{\kappa})...dW(\xi_1)  $$
$$-K(\kappa,H_0) \int_{-\infty}^{\infty} \int_{-\infty}^{\xi_1}...\int_{-\infty}^{\xi_{\kappa-1}}\int_{0}^{t_2}\prod_{i=1}^{\kappa}\bigg((s-\xi_i)^{H_0-\frac{3}{2}}I(\xi_i<s)\bigg)dsdW(\xi_{\kappa})...dW(\xi_1)$$ 
$$ {\buildrel d \over =}K(\kappa,H_0) \int_{-\infty}^{\infty} \int_{-\infty}^{\xi_1}...\int_{-\infty}^{\xi_{\kappa-1}}\int_{t_2}^{t_1}\prod_{i=1}^{\kappa}\bigg((s-\xi_i)^{H_0-\frac{3}{2}}I(\xi_i<s)\bigg)dsdW(\xi_{\kappa})...dW(\xi_1).$$

By the change of variables $s' = s - t_2, \ \xi_1'=\xi_1-t_2$ we obtain
$$Y(t_1) - Y(t_2) {\buildrel d \over =} K(\kappa,H_0) \int_{-\infty}^{\infty} \int_{-\infty}^{\xi_1'+t_2}...\int_{-\infty}^{\xi_{\kappa-1}}\int_{0}^{t_1-t_2}(s'-\xi_1')^{H_0-\frac{3}{2}}I(\xi_1'<s')$$
$$ \times \prod_{i=2}^{m}\bigg((s'+t_2-\xi_i)^{H_0-\frac{3}{2}}I(\xi_i<s'+t_2)ds'\bigg)dW(\xi_{\kappa})...dW(\xi_1^{'}).$$

Changing variables as $\xi_i'=\xi_i-t_2, i = 2,...\kappa,$ by induction the last expression equals
$$ K(\kappa,H_0) \int_{-\infty}^{\infty} \int_{-\infty}^{\xi_1'}...\int_{-\infty}^{\xi_{\kappa-1}'}\int_{0}^{t_1-t_2}\prod_{i=1}^{\kappa}(s'-\xi_i')^{H_0-\frac{3}{2}}I(\xi_i'<s')ds'dW(\xi_{\kappa-1}')...dW(\xi_{1}'),$$ which shows that the process $Y(t), \ t\in[0,1],$ has stationary increments.
\end{proof}

Thus, the averaging of nonlinear transformations of long-range dependent Gaussian random processes over the homothetic intervals  $[0,rt] \subset \mathbb{R}_{+}$ leads to limit processes with stationary increments.

\section{Multidimensional case of functionals of random fields}

The aim of this  section is to demonstrate that the result of Section 3 is not true for the averaging over multidimensional sets.

Let $\xi(\omega,x):\Omega \times \mathbb{R}^n\to\mathbb{R}$ be a measurable Gaussian long-range dependent homogeneous isotropic random field.

We will consider asymptotics of the nonlinear functionals 
\begin{equation}
\label{eq}
\int_{\Delta(rt^{1/n})}G(\xi(x))dx, \ \ r\to \infty, \ t\in[0,1],
\end{equation} where $\Delta(rt^{1/n})$  is a homothetic transformation with parameter $rt^{1/n}$ of a simply connected $n\mbox{-}$dimensional  compact set $\Delta\subset\mathbb{R}^n$ containing the origin with the Lebesgue measure $|\Delta| > 0$.  Note that the integral \eqref{eq} exists with probability 1, see Theorem 1.1.1 in \cite{ivanov1989statistical}. 

Let $\int_{\mathbb{R}^{n\kappa}}^{'}$ denote the Wiener-It\^{o} multidimensional stochastic integral, see \cite{dobrushin1979gaussian}.

\begin{theorem} {\rm{\cite[Theorem 2.10.2]{ivanov1989statistical}, \cite[Theorem 5]{leonenko2014sojourn}}} \label{leonenko} Let Assumption {\rm{\ref{assump1}}} or {\rm{\ref{assump2}}} be satisfied and $\alpha \in (0, n/\kappa).$  Then, if $r \to \infty$ the finite-dimensional distributions of the process
$$Y_{r}(t) = \frac{\int_{\Delta(r t^{1/n})}H_\kappa(\xi(x))dx}{r^{n-\kappa\alpha/2}\sqrt{c_2(n,\kappa,\alpha,\Delta)}L^{\kappa/2}(r)}, \ t \in [0,1],$$
converge weakly to the finite-dimensional distributions of the process
$$Y(t) = \int_{\mathbb{R}^{n\kappa}}^{'}\prod_{j=1}^\kappa ||\lambda||^{(\alpha - n)/2}\int\displaylimits_{\Delta(t^{1/n})}e^{i(\lambda^{(1)}+...+\lambda^{(\kappa)},x)}dx\prod_{j=1}^\kappa W(d\lambda^{j}), \ t \in[0,1],$$where $c_2(n,\kappa,\alpha, \Delta) = c_1^\kappa(n, \alpha)\kappa!\int_{\Delta}\int_{\Delta}||x-y||^{-\kappa \alpha}dx dy.$  
\end{theorem}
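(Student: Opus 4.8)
The plan is to work with the spectral (multiple Wiener-It\^{o} integral) representation of the Hermite polynomials and to reduce the finite-dimensional convergence to convergence in $L^2(\mathbb{R}^{n\kappa})$ of deterministic kernels; by the isometry of the $\kappa$-fold Wiener-It\^{o} integral, whose $L^2(\Omega)$-norm squared is $\kappa!$ times the $L^2(\mathbb{R}^{n\kappa})$-norm squared of the (symmetric) kernel, this is equivalent to convergence in $L^2(\Omega)$ inside the $\kappa$-th Wiener chaos. Under Assumption~\ref{assump2} the field has a spectral density, so $\xi(x)=\int_{\mathbb{R}^n}e^{i(\lambda,x)}\sqrt{f(\|\lambda\|)}\,W(d\lambda)$ for a complex Gaussian white noise $W$ with control measure $d\lambda$; under Assumption~\ref{assump1} the Tauberian theory (Lemma~\ref{lemma1} and \cite{leonenko2013tauberian}) reduces the situation to the same one with $f(\rho)\sim\mathrm{const}\cdot L(1/\rho)\,\rho^{\alpha-n}$ near the origin. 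By It\^{o}'s formula for Hermite polynomials,
$$H_\kappa(\xi(x))=\int_{\mathbb{R}^{n\kappa}}^{'}e^{i(\lambda^{(1)}+\dots+\lambda^{(\kappa)},x)}\prod_{j=1}^{\kappa}\sqrt{f(\|\lambda^{(j)}\|)}\,W(d\lambda^{(j)}),$$
and a stochastic Fubini argument moves $\int_{\Delta(rt^{1/n})}dx$ inside the stochastic integral, so that the numerator of $Y_r(t)$ is a single $\kappa$-fold Wiener-It\^{o} integral with kernel $\bigl(\int_{\Delta(rt^{1/n})}e^{i(\lambda^{(1)}+\dots+\lambda^{(\kappa)},x)}dx\bigr)\prod_{j=1}^{\kappa}\sqrt{f(\|\lambda^{(j)}\|)}$, which is symmetric in $\lambda^{(1)},\dots,\lambda^{(\kappa)}$.

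Next I rescale the spectral variables $\lambda^{(j)}\mapsto\lambda^{(j)}/r$. The substitution $x=ry$ in the inner integral produces the factor $r^{n}$; writing $f(\rho)=h(\rho)/\rho^{n-\alpha}$ produces the factor $r^{\kappa(n-\alpha)/2}$ from the $\kappa$ spectral densities; and the self-similarity of $W$ of index $n/2$ in $\mathbb{R}^n$ produces the factor $r^{-n\kappa/2}$. These combine to exactly $r^{n-\kappa\alpha/2}$, so the normalisation $r^{n-\kappa\alpha/2}\sqrt{c_2}\,L^{\kappa/2}(r)$ cancels both the power of $r$ and the slowly varying part. After this change of variables $Y_r(t)$ equals, in distribution, the $\kappa$-fold Wiener-It\^{o} integral with kernel
$$\frac{1}{\sqrt{c_2}\,L^{\kappa/2}(r)}\Bigl(\int_{\Delta(t^{1/n})}e^{i(\lambda^{(1)}+\dots+\lambda^{(\kappa)},x)}dx\Bigr)\prod_{j=1}^{\kappa}\sqrt{h(\|\lambda^{(j)}\|/r)}\,\|\lambda^{(j)}\|^{(\alpha-n)/2},$$
and, since $h$ is continuous at the origin and slowly varying functions converge uniformly on compact sets (Potter's bounds), $\prod_{j=1}^{\kappa}\sqrt{h(\|\lambda^{(j)}\|/r)}/L^{\kappa/2}(r)$ tends to a positive constant; hence the kernel converges pointwise almost everywhere, as $r\to\infty$, to a constant multiple of $\prod_{j=1}^{\kappa}\|\lambda^{(j)}\|^{(\alpha-n)/2}\int_{\Delta(t^{1/n})}e^{i(\lambda^{(1)}+\dots+\lambda^{(\kappa)},x)}dx$, the kernel of $Y(t)$, the constant being the one that makes $c_2$ the normalising constant, via the Tauberian relation $B(r)\sim h(0)c_1(n,\alpha)r^{-\alpha}$ of Lemma~\ref{lemma1}.

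It remains to upgrade the pointwise convergence of the kernels to convergence in $L^2(\mathbb{R}^{n\kappa})$, and this is where the hypothesis $\alpha\in(0,n/\kappa)$ enters. A dominating function independent of $r$ is obtained by splitting each $\|\lambda^{(j)}\|$ into a neighbourhood of the origin and its complement and controlling $h(\|\lambda^{(j)}\|/r)$ by the boundedness of $h$ and Potter's bounds. Finiteness of the limiting $L^2(\mathbb{R}^{n\kappa})$-norm follows by recognising $\|\lambda^{(j)}\|^{\alpha-n}$ as a constant multiple of the Fourier transform of the Riesz kernel $\|\cdot\|^{-\alpha}$ and applying Parseval, which reduces $\int_{\mathbb{R}^{n\kappa}}\prod_{j=1}^{\kappa}\|\lambda^{(j)}\|^{\alpha-n}\bigl|\int_{\Delta(t^{1/n})}e^{i(\lambda^{(1)}+\dots+\lambda^{(\kappa)},x)}dx\bigr|^2 d\lambda$ to a constant times $\int_{\Delta(t^{1/n})}\int_{\Delta(t^{1/n})}\|x-y\|^{-\kappa\alpha}dx\,dy$; this is finite precisely when $\kappa\alpha<n$, and at $t=1$ it is the integral entering $c_2$. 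The resulting $L^2(\mathbb{R}^{n\kappa})$-convergence of the kernels gives $Y_r(t)\to Y(t)$ in $L^2(\Omega)$ for each fixed $t$. Since any linear combination $\sum_{k=1}^{d}a_kY_r(t_k)$ is itself a $\kappa$-fold Wiener-It\^{o} integral whose kernel is the corresponding linear combination of the rescaled kernels, the same argument yields $\sum_{k=1}^{d}a_kY_r(t_k)\to\sum_{k=1}^{d}a_kY(t_k)$ in $L^2(\Omega)$, hence in distribution; this is the asserted convergence of finite-dimensional distributions. Equivalently, one may verify the convergence of every joint cumulant of $(Y_r(t_1),\dots,Y_r(t_d))$ through the diagram formula, all these variables lying in a fixed Wiener chaos.

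The main obstacle is the uniform-integrability step. One must exhibit an $r$-independent $L^1(\mathbb{R}^{n\kappa})$ dominant for the squared kernels, and the delicate region is the joint neighbourhood of $\lambda^{(1)}=\dots=\lambda^{(\kappa)}=0$ and of $\lambda^{(1)}+\dots+\lambda^{(\kappa)}=0$, where the singularities $\|\lambda^{(j)}\|^{\alpha-n}$, the factor $h(\|\lambda^{(j)}\|/r)$ (whose behaviour near the origin, under Assumption~\ref{assump1}, is controlled only through Potter's bounds and not by continuity), and the slow decay of $\int_{\Delta(t^{1/n})}e^{i(\cdot,x)}dx$ (no smoothness of $\partial\Delta$ being assumed, only compactness of $\Delta$) interact. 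This is exactly why one argues through the Parseval/Riesz-kernel identity rather than estimating the integrand directly, and it is there that the restriction $\alpha<n/\kappa$ is used.
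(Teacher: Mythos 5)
The paper does not prove this theorem: it is imported verbatim from \cite{ivanov1989statistical} and \cite{leonenko2014sojourn}, so there is no internal proof to compare against. Your argument is, in substance, the standard proof used in those references (and going back to Dobrushin--Major and Taqqu): represent $H_\kappa(\xi(x))$ as a $\kappa$-fold Wiener--It\^{o} integral via the spectral representation and It\^{o}'s formula, apply stochastic Fubini, rescale the spectral variables by $1/r$ so that the powers of $r$ combine to exactly $r^{n-\kappa\alpha/2}$, and reduce finite-dimensional convergence to $L^2(\mathbb{R}^{n\kappa})$-convergence of the kernels inside a fixed Wiener chaos, with the Parseval/Riesz-kernel identity producing the constant $c_2$ and explaining the restriction $\kappa\alpha<n$. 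The bookkeeping of the exponents and the Cram\'er--Wold step via linear combinations of kernels are correct. The one point that is genuinely glossed over is the case where only Assumption~\ref{assump1} holds: a mean-square continuous homogeneous isotropic field with the stated covariance need not possess a spectral \emph{density}, so the representation $\xi(x)=\int e^{i(\lambda,x)}\sqrt{f(\|\lambda\|)}\,W(d\lambda)$ is not automatic, and the passage between the covariance-based and spectral-based hypotheses is exactly the nontrivial content of the Tauberian--Abelian theorems of \cite{leonenko2013tauberian}; your sentence ``reduces the situation to the same one'' hides that work. If you restrict to Assumption~\ref{assump2}, your sketch is a faithful outline of the cited proof.
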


\begin{remark}

By the self-similarity of the Gaussian white noise $W(d(ax)) \overset{d}{=} a^{n/2}W(dx)$, we obtain that the  process $Y(t), \ t\in[0,1],$ is self-similar with parameter $1-\alpha \kappa/2n.$ It means that the self-similarity of the limit processes preserves in the multidimensional case.

\end{remark}

\begin{lemma}\label{lemma2} The variance of  increments of the limit process $Y(t), \ t\in[0,1],$ has the representation 
$$Var(Y(t+h)-Y(t)) = \frac{\kappa!|\Delta|^2h^2}{c_{2}(n,\kappa,\alpha, \Delta)}E(||U-V||^{-\kappa\alpha}),$$ where $h\in[0,1-t],$ $U,V$ are independent uniformly distributed  random vectors in the set $\Delta((t+h)^{1/n})\setminus \Delta(t^{1/n}).$
\end{lemma}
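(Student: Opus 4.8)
The plan is to compute the variance of the increment $Y(t+h)-Y(t)$ directly from the Wiener--It\^o integral representation in Theorem~\ref{leonenko}, using the isometry property of multiple Wiener--It\^o integrals, and then to recognise the resulting deterministic double integral over $\Delta((t+h)^{1/n})\setminus\Delta(t^{1/n})$ as an expectation over two independent uniform random vectors. First I would write, by linearity of the stochastic integral,
$$Y(t+h)-Y(t) = \int_{\mathbb{R}^{n\kappa}}^{'}\prod_{j=1}^\kappa ||\lambda^{(j)}||^{(\alpha-n)/2}\Bigg(\int_{\Delta((t+h)^{1/n})}-\int_{\Delta(t^{1/n})}\Bigg)e^{i(\lambda^{(1)}+\dots+\lambda^{(\kappa)},x)}dx\prod_{j=1}^\kappa W(d\lambda^{j}),$$
and observe that the difference of the two inner integrals is just the integral of the same exponential over the annular set $D_{t,h}:=\Delta((t+h)^{1/n})\setminus\Delta(t^{1/n})$, whose Lebesgue measure is $|D_{t,h}|=|\Delta|\,(t+h)-|\Delta|\,t=|\Delta|\,h$ by the scaling $|\Delta(a)|=a^n|\Delta|$.

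Next I would apply the It\^o isometry for $\kappa$-fold integrals: for a symmetrised kernel $g(\lambda^{(1)},\dots,\lambda^{(\kappa)})$ one has $E\big|\int_{\mathbb{R}^{n\kappa}}^{'} g\,\prod W(d\lambda^{j})\big|^2=\kappa!\,\|\tilde g\|_{L_2}^2$, and since here the kernel is already symmetric in the $\kappa$ blocks $\lambda^{(j)}$,
$$Var\big(Y(t+h)-Y(t)\big)=\kappa!\int_{\mathbb{R}^{n\kappa}}\prod_{j=1}^\kappa ||\lambda^{(j)}||^{\alpha-n}\,\bigg|\int_{D_{t,h}}e^{i(\lambda^{(1)}+\dots+\lambda^{(\kappa)},x)}dx\bigg|^2\prod_{j=1}^\kappa d\lambda^{(j)}.$$
Expanding the squared modulus as a double integral over $x,y\in D_{t,h}$ and interchanging the order of integration (justified by Fubini, using $\alpha\in(0,n/\kappa)$ so that $\prod_j||\lambda^{(j)}||^{\alpha-n}$ is locally integrable and the spatial integrals are over a bounded set), the frequency integral factorises over $j$:
$$Var\big(Y(t+h)-Y(t)\big)=\kappa!\int_{D_{t,h}}\int_{D_{t,h}}\prod_{j=1}^\kappa\left(\int_{\mathbb{R}^n}||\lambda||^{\alpha-n}e^{i(\lambda,x-y)}d\lambda\right)dx\,dy.$$

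The key computation is then the Fourier transform of the Riesz-type kernel: $\int_{\mathbb{R}^n}||\lambda||^{\alpha-n}e^{i(\lambda,z)}d\lambda=c_1(n,\alpha)\,||z||^{-\alpha}$ with the constant $c_1(n,\alpha)=2^\alpha\pi^{n/2}\Gamma(\alpha/2)/\Gamma((n-\alpha)/2)$ from Lemma~\ref{lemma1}; taking the $\kappa$-th power gives $\prod_{j=1}^\kappa(\cdots)=c_1^\kappa(n,\alpha)\,||x-y||^{-\kappa\alpha}$, so that
$$Var\big(Y(t+h)-Y(t)\big)=\kappa!\,c_1^\kappa(n,\alpha)\int_{D_{t,h}}\int_{D_{t,h}}||x-y||^{-\kappa\alpha}dx\,dy.$$
Finally I would rewrite this in probabilistic form: if $U,V$ are independent and uniform on $D_{t,h}$, then $\int_{D_{t,h}}\int_{D_{t,h}}||x-y||^{-\kappa\alpha}dx\,dy=|D_{t,h}|^2\,E\big(||U-V||^{-\kappa\alpha}\big)=|\Delta|^2h^2\,E\big(||U-V||^{-\kappa\alpha}\big)$, and dividing and multiplying by $c_2(n,\kappa,\alpha,\Delta)=c_1^\kappa(n,\alpha)\kappa!\int_\Delta\int_\Delta||x-y||^{-\kappa\alpha}dx\,dy$ yields exactly the claimed identity
$$Var\big(Y(t+h)-Y(t)\big)=\frac{\kappa!\,|\Delta|^2h^2}{c_2(n,\kappa,\alpha,\Delta)}\,E\big(||U-V||^{-\kappa\alpha}\big).$$

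The main obstacle I anticipate is the rigorous justification of the Fubini interchange and of the Riesz-kernel Fourier identity as an honest (rather than merely distributional) equality: the integrand $||\lambda||^{\alpha-n}$ is not integrable at infinity, so one must argue that the oscillatory spatial factor $\int_{D_{t,h}}e^{i(\lambda,x)}dx$ (which, for a set with smooth boundary, decays like $||\lambda||^{-1}$ in suitable directions) together with the condition $\kappa\alpha<n$ makes the full iterated integral absolutely convergent — for instance by passing to the convergent double spatial integral of $||x-y||^{-\kappa\alpha}$ first and treating the frequency side as a tempered-distribution pairing, or by a regularisation/truncation argument. Everything else is bookkeeping: scaling of Lebesgue measure, the isometry, and the change to uniform random vectors.
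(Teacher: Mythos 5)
Your argument is correct in structure but takes a genuinely different route from the paper's. You compute the variance directly from the Wiener--It\^{o} spectral representation of the limit process $Y$, via the isometry $E|I_\kappa(g)|^2=\kappa!||g||^2_{L_2}$ and the Riesz-kernel Fourier identity $\int_{\mathbb{R}^n}||\lambda||^{\alpha-n}e^{i(\lambda,z)}d\lambda=c_1(n,\alpha)||z||^{-\alpha}$. The paper never touches the spectral representation: it uses the convergence of second moments of the chaos processes $Y_r$ to those of $Y$, writes $Var(Y_r(t+h)-Y_r(t))$ through the covariance identity $EH_\kappa(\xi(x))H_\kappa(\xi(y))=\kappa!\,B^\kappa(||x-y||)$, rescales $x=r(t+h)^{1/n}\widetilde{x}$, and passes to the limit using Lemma~\ref{lemma1} together with the slowly-varying-function asymptotics $\int_A f(s)L(rs)ds\sim L(r)\int_A f(s)ds$. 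The paper's route buys two things: it avoids exactly the obstacle you flag at the end (no interchange of a non-absolutely-convergent frequency integral, no distributional Fourier pairing --- one lands on the convergent spatial integral $\iint||x-y||^{-\kappa\alpha}dxdy$ immediately), and it pins down the constant automatically, since the normalization of $Y$ is inherited from the factor $r^{n-\kappa\alpha/2}\sqrt{c_2(n,\kappa,\alpha,\Delta)}L^{\kappa/2}(r)$ in $Y_r$.

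The one substantive defect in your version is precisely that constant. Applied literally to the representation of $Y$ stated in Theorem~\ref{leonenko} (which carries no normalizing prefactor), your computation yields $\kappa!\,c_1^{\kappa}(n,\alpha)\iint_{D_{t,h}^2}||x-y||^{-\kappa\alpha}dxdy$ (up to Fourier-convention powers of $2\pi$), whereas the lemma asserts $\frac{\kappa!}{c_2(n,\kappa,\alpha,\Delta)}\iint_{D_{t,h}^2}||x-y||^{-\kappa\alpha}dxdy$; these differ by the factor $c_1^{2\kappa}(n,\alpha)\,\kappa!\iint_{\Delta^2}||x-y||^{-\kappa\alpha}dxdy$, which is not $1$ in general. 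The $t$- and $h$-dependence you obtain is exactly right (and is all that the later non-stationarity results use), but to prove the lemma as stated you must either insert the normalizing prefactor that the spectral representation of $Y$ implicitly requires for consistency with $Y_r$, or bypass it as the paper does and define $Var(Y(t+h)-Y(t))$ as $\lim_{r\to\infty}Var(Y_r(t+h)-Y_r(t))$.
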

\begin{proof} By Theorem \ref{leonenko} variances of increments of $Y_r(t)$ converge to variances of increments of $Y(t),$ when $r\to\infty.$ Hence, we get
$$Var(Y(t+h)-Y(t)) = \lim_{r \to \infty}Var(Y_{r}(t+h)-Y_{r}(t))$$
$$= \lim_{r \to \infty}\frac{1}{r ^{2n-\kappa\alpha}c_{2}(n,\kappa,\alpha,\Delta)L^\kappa(r)} Var \bigg(\int\displaylimits_{\Delta(r (t+h)^{1/n})\setminus \Delta(r t^{1/n})}H_\kappa(\xi(x))dx\bigg)$$
$$=\lim_{r \to \infty}\frac{\kappa!}{r^{2n-\kappa\alpha}c_{2}(n,\kappa,\alpha,\Delta)L^\kappa(r)}\iint\displaylimits_{\big(\Delta(r (t+h)^{1/n})\setminus \Delta(r t^{1/n})\big)^2}B^\kappa(||x-y||)dxdy,$$ where $\iint\displaylimits_{A^2}$ denotes the double integral $\int_A\int_A.$

By Lemma \ref{lemma1}, changing variables $x = r(t+h)^{1/n}\widetilde{x}, \ y = r(t+h)^{1/n}\widetilde{y}$  and using the property of integrals of slowly varying functions $\int_A f(s)L(rs)ds\sim L(r)\int_A f(s)ds,$  as $r\to\infty$, see Theorem 2.7 in \cite{seneta1976functions}, we obtain
$$Var(Y(t+h)-Y(t))= \lim_{r \to \infty}\frac{\kappa!r^{2n}}{r^{2n-\kappa\alpha}c_{2}(n,\kappa,\alpha,\Delta)L^\kappa(r)}\iint\displaylimits_{\big(\Delta((t+h)^{1/n})\setminus \Delta(t^{1/n})\big)^2}\hspace{-5mm}B^\kappa(r||\widetilde{x}-\widetilde{y}||)dxdy$$

$$=\lim_{r \to \infty}\frac{\kappa!r^{2n}}{r^{2n-\kappa\alpha}c_{2}(n,\kappa,\alpha, \Delta)L^\kappa(r)}\iint\displaylimits_{\big(\Delta( (t+h)^{1/n})\setminus \Delta( t^{1/n})\big)^2}\frac{L^\kappa(r||\widetilde{x}-\widetilde{y}||)}{(r||\widetilde{x}-\widetilde{y}||)^{\kappa\alpha}}d\widetilde{x}d\widetilde{y} $$
$$= \frac{\kappa!}{c_{2}(n,\kappa,\alpha, \Delta)}\iint\displaylimits_{\big(\Delta((t+h)^{1/n})\setminus \Delta( t^{1/n})\big)^2}{||\widetilde{x}-\widetilde{y}||^{-\kappa\alpha}}d\widetilde{x}d\widetilde{y}=\frac{|\Delta|^2h^2\kappa!}{c_{2}(n,\kappa,\alpha, \Delta)}E(||U-V||^{-\kappa\alpha}).$$
\end{proof}
 
\begin{remark}
The integrand in $\int_{\Delta}\int_{\Delta}||x-y||^{-\kappa\alpha}dxdy$ is a potential kernel, which also belongs to the class of unbounded generalized positive definite functions, see {\rm{\cite{phillips2019extension}}}. The methodology developed in this paper can be useful in studying properties of potential kernel and generalized positive definite functions, in particular, for obtaining their upper bounds and comparison, see~\cite{gaal2018integral}.

\end{remark}

Section 3 proved that integral functionals of  nonlinear transformations of Gaussian random processes over intervals in $\mathbb{R}_{+}$ converges to  processes with stationary increments. However, in the multidimensional case $\mathbb{R}^n,\ n>1,$ it might be not always true as the next example shows. In this example, the classical case of a disk observation window  is studied.

\begin{example}
\label{example1}
 Let the observation window $\Delta(t)\in\mathbb{R}^2, t>0,$ is a centered disk $B_2(t) = \{x\in\mathbb{R}^2: ||x||\leq t\}$ which is a homothetic transformation of a unit radius centered disk $B_2 = \{x\in\mathbb{R}^2: ||x||\leq 1\}.$  We will show that the limit process $Y(t), \ t\in[0,1],$ does not have stationary increments.

By Lemma \ref{lemma2} the variance of the increment $Y(t+h)-Y(t)$ is given by  the formula
$$\frac{\kappa!|B_2|^2h^2}{c_{2}(2,\kappa,\alpha, B_2)}E(||U-V||^{-\kappa\alpha}),$$ where $E(||U-V||^{-\kappa\alpha})$ is a function of $t$ and the expectation is taken over the set $B_2((t+h)^{1/2})\setminus B_2(t^{1/2}).$

Let us investigate the function $M(t,h)=E(||U-V||^{-\kappa\alpha})$ which  equals
\begin{equation}
\label{eq1}
M(t,h)= \frac{1}{|B_2((t+h)^{1/2})\setminus B_2(t^{1/2})|^2}  
\iint\displaylimits_{\big(B_2((t+h)^{1/2})\setminus B_2(t^{1/2})\big)^2}||x-y||^{-\kappa\alpha}dxdy.
\end{equation}

We will demonstrate that the derivative of $E(||U-V||^{-\kappa\alpha})$ is not identically equal~$0$ on the interval $t\in[0,1].$ 

Notice that for each fixed $t>0$ the velocity $v(x,t)$  is the same for all circle points $x\in \partial (B_2(t)).$ Therefore, the random variable $Y$ in Theorem~\ref{croft} is uniformly distributed on $\partial (B_2(t)).$ 

By applying Crofton's mean value formula to the set $B_2((t+h)^{1/2})\setminus B_2(t^{1/2})$ twice, the first time with a fixed external boundary and the second time with the fixed internal boundary, one obtains
$$\frac{d}{dt}M(t,h) = 2\frac{dV}{V}(M^{+}(t,h)-M^{-}(t,h)),$$ where $V = |B_2((t+h)^{1/2})\setminus B_2(t^{1/2})|$ and   $$M^{+}(t,h) = \frac{1}{|\partial (B_2(t+h)^{1/2})|}\int_{\partial (B_2(t+h)^{1/2})}\int_{B_2((t+h)^{1/2})\setminus B_2(t^{1/2})}||x-y||^{-\kappa\alpha}dxdy,$$ 
 $$M^{-}(t,h) = \frac{1}{|\partial (B_2(t)^{1/2})|}\int_{\partial (B_2(t)^{1/2})}\int_{B_2((t+h)^{1/2})\setminus B_2(t^{1/2})}||x-y||^{-\kappa\alpha}dxdy.$$

Let us consider the asymptotic behavior of $\frac{d}{dt}M(t,h)$ at the origin, i.e. $t=0,$ by finding the asymptotic behaviors of $M^{+}(t,h)$ and $M^{-}(t,h)$.

By the change of variables $ \tilde{x} = (t+h)^{1/2}x, \   \tilde{y} = (t+h)^{1/2}y,$
$$\lim_{t\to0} M^{+}(t,h) = \lim_{t\to0} \frac{(t+h)^{1-\kappa\alpha/2}}{2\pi\sqrt{t+h}}\int_{\partial (B_2(1))}\int_{B_2(1)\setminus B_2((\frac{t}{t+h})^{1/2})}||\tilde x- \tilde y||^{-\kappa\alpha}d\tilde xd\tilde y$$

$$=\frac{h^{1/2 - \kappa\alpha/2}}{2\pi}\int_{\partial (B_2(1))}\int_{B_2(1)}||\tilde x- \tilde y||^{-\kappa\alpha}d\tilde xd\tilde y.$$

Similarly, for $M^{-}(t,h)$ we get
$$\lim_{t\to0} M^{-}(t,h) = \lim_{t\to0}\frac{t^{1/2}(t+h)^{1/2}}{2\pi t^{1/2}}\int_{\partial (B_2(1))}\int_{B_2(1)\setminus B_2((\frac{t}{t+h})^{1/2})}||(t+h)^{1/2}\tilde x- t^{1/2}\tilde y||^{-\kappa\alpha}d\tilde xd\tilde y$$
$$=\frac{h^{1/2-\kappa\alpha/2}}{2\pi}\int_{\partial (B_2(1))}\int_{B_2(1)}||\tilde x||^{-\kappa\alpha}d\tilde xd\tilde y.$$

Thus,
$$\lim_{t\to0}\frac{d}{dt}M(t,h) = \frac{h^{1/2-\kappa\alpha/2}}{2\pi} \int_{\partial (B_2(1)}\bigg(\int_{B_2(1)}||\tilde x- \tilde y||^{-\kappa\alpha}d\tilde x - \int_{B_2(1)}||\tilde x||^{-\kappa\alpha}d\tilde x\bigg)d\tilde y$$

$$=\frac{h^{1/2-\kappa\alpha/2}}{2\pi} \int_{\partial (B_2(1)}\bigg(\int_{B_2(1)+ \tilde y}||\tilde x||^{-\kappa\alpha}d\tilde x - \int_{B_2(1)}||\tilde x||^{-\kappa\alpha}d\tilde x\bigg)d\tilde y.$$

For all $\tilde{y} \neq 0$ 
$$\int_{B_2(1)+\tilde{y}}||\tilde x||^{-\kappa\alpha}d\tilde x < \int_{B_2(1)}||\tilde x||^{-\kappa\alpha}d\tilde x$$ because the integration is over a non-centered disk $B_2(1) + \tilde y$.

Thus, $\lim_{t\to0}\frac{d}{dt}M(t,h) < 0$ and the function $Var(Y(t+h)-Y(t))$ is strictly decreasing in the neighborhood of the origin.
\end{example}

\begin{remark}\label{rem2}
For the case when the center of homothety $x_c$ is different from the center of the disk the increments are also non-stationary. Indeed, it is easy to demonstrate that $\lim_{t\to0}\big(M^{+}(t,h) - M^{-}(t,h) \big) \neq 0.$ 

Note that in this case, contrary to Example~\ref{example1},  for a fixed $t>0$ the velocity $v(x,t)$ varies over circle points $x\in \partial (B_2(t)).$ Therefore,  the random variable $Y$ in Theorem~\ref{croft} is not uniformly distributed on $\partial (B_2(t)).$  However, for any $t>0$ the density of $Y$ is the same for points $x \in \partial (B_2(t))$ belonging to same ray starting at $x_c.$ 

When $t\to0$ one has to compare the averages of $||x-y||^{-\kappa \alpha}$ over all points $y\in B_2(h^{1/2})$ for two points:

\begin{itemize}
\item [1)] the center of homothety $x=x_c\in B_2(h^{1/2}),$
\item [2)] a point on the boundary $x = x_b \in \partial B_2(h^{1/2}).$
\end{itemize}
As for each $x_b$ the homothety center $x_c$ is a midpoint of a symmetric arc, see Figure~{\rm{\ref{fig:arc}}}, then, by the symmetry, the averages over $y\in A$ of the distances $||x_c-y||^{-\kappa\alpha}$ and $||x_b-y||^{-\kappa\alpha}$ are equal. However, the average over $y\in B_2(h^{1/2})\setminus A$ of 
$||x_c-y||^{-\kappa\alpha}$ is greater than that of $||x_b-y||^{-\kappa \alpha},$ see Figure {\rm{\ref{fig:arc}}}.

So, $\lim_{t\to0}(M^{+}(t,h)-M^{-}(t,h))<0.$
\begin{figure}[h!]
  \centering
  \includegraphics[width=0.5\linewidth,trim={0 2cm 0 2cm},clip]{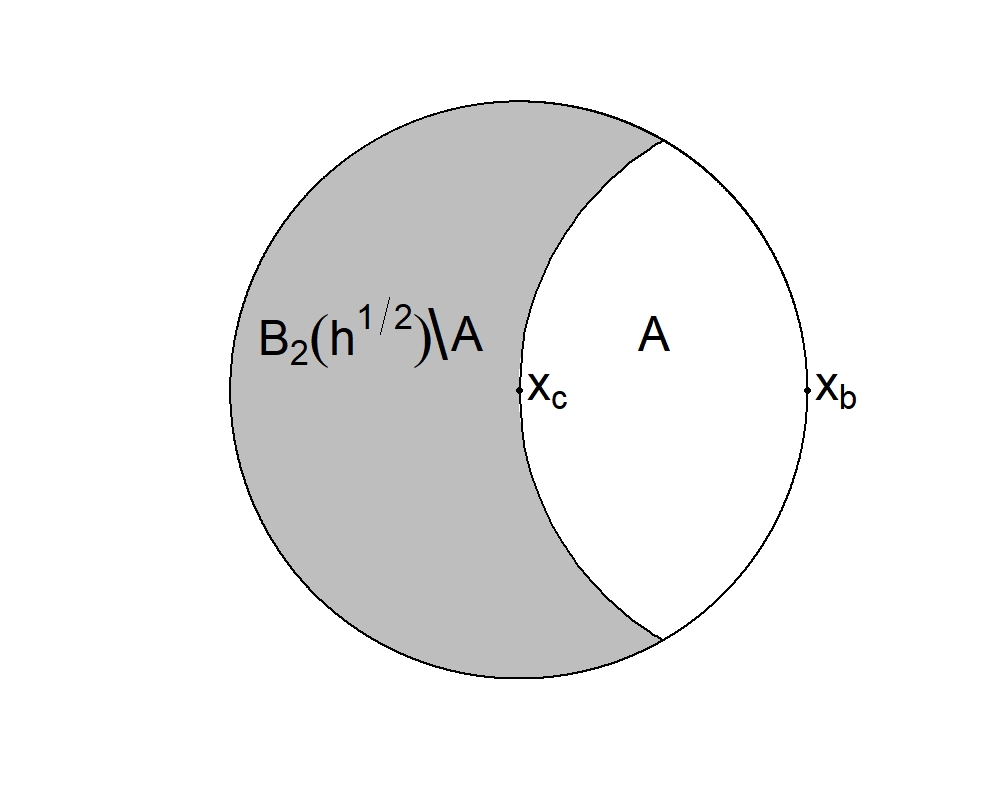}
  \caption{Regions of equal and different averages.}
  \label{fig:arc}
\end{figure}
\end{remark}

\begin{remark}
If the observation window $\Delta(t)\in\mathbb{R}^n,\ n\geq 2, \ t>0,$  is a homothetic transformation of a unit radius (not necessary centered) $n$-dimentional ball $B_n = \{x\in\mathbb{R}^n: ||x-a||\leq 1, \ ||a|| < 1\}$  with the parameter of homothety $t$, then by repeating the above reasoning it is straightforward to show the limit process $Y(t), \ t\in[0,1],$ does not have stationary increments. 
\end{remark}

Example \ref{example1} and Remark~\ref{rem2} show that for the disk $B_2(t)$ the averaging leads to the limit processes with a non-stationary structure of increments. If the observation window $\Delta$ has an arbitrary shape than one needs a detailed investigation of the difference of averages $M^{+}(t,h) - M^{-}(t,h),$ which is not straightforward for general domains. 

The following theorem is the main result of this paper. It shows that in the multidimensional case  increments of the limit process $Y(t)$  are non-stationary for any non-degenerate observation window $\Delta \in \mathbb{R}^n, \ n\geq1.$

\begin{theorem} Let the conditions of Theorem {\rm{\ref{leonenko}}} be satisfied. Then, for all sets $\Delta \in \mathbb{R}^n, \ n >1,$ the limit processes $Y(t), \ t\in[0,1],$ have non-stationary increments.
\end{theorem}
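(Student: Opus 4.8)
The plan is to reduce the statement, via Lemma~\ref{lemma2}, to showing that the function $M(t)=E\big(\|U-V\|^{-\kappa\alpha}\big)$ — where $U,V$ are independent and uniform on the ``shell'' $\Delta((t+h)^{1/n})\setminus\Delta(t^{1/n})$ — is non-constant in $t$ on $[0,1-h]$ for any fixed admissible $h>0$. Indeed, stationarity of increments would force $\mathrm{Var}(Y(t+h)-Y(t))$ to depend on $h$ only, and since the prefactor $\kappa!|\Delta|^2h^2/c_2(n,\kappa,\alpha,\Delta)$ is independent of $t$, it would force $t\mapsto M(t)$ to be constant. So it suffices to exhibit one $h$ and to show $\frac{d}{dt}M(t)\not\equiv 0$.

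The main tool is Crofton's formula (Theorem~\ref{croft}) applied to the shell domain $\Delta(t)^{shell}:=\Delta((t+h)^{1/n})\setminus\Delta(t^{1/n})$, exactly as in Example~\ref{example1} and Remark~\ref{rem2}: one freezes the outer boundary and lets the inner boundary evolve, and separately freezes the inner boundary and lets the outer boundary evolve, obtaining
$$\frac{d}{dt}M(t)=n\,\frac{\tfrac{d}{dt}V(t)}{V(t)}\big(M^{+}(t)-M^{-}(t)\big),$$
where $V(t)=|\Delta(t)^{shell}|$, and $M^{+}(t)$, $M^{-}(t)$ are the averages of $\|x-y\|^{-\kappa\alpha}$ with $x$ ranging (with the Crofton weight $\propto v(x,t)$) over $\partial\Delta((t+h)^{1/n})$, resp.\ $\partial\Delta(t^{1/n})$, and $y$ uniform on the shell. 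Then I would study the limit $t\to 0^+$. After rescaling by $(t+h)^{1/n}$ (for $M^{+}$) and analogously for $M^{-}$, the inner ball shrinks to a point and the shell converges to the full scaled copy of $\Delta$; the upshot is that $\lim_{t\to0^+}\big(M^{+}(t)-M^{-}(t)\big)$ compares (i) the average of $\|x_c - y\|^{-\kappa\alpha}$ over $y$ uniform on $\Delta$, where $x_c$ is the center of homothety (the limit of the shrinking inner boundary), against (ii) the Crofton-weighted average over $x\in\partial\Delta$ of the averages of $\|x-y\|^{-\kappa\alpha}$ over $y$ uniform on $\Delta$. The key geometric inequality is that for \emph{any} interior point $x_c$ of $\Delta$ and any boundary point $x_b\in\partial\Delta$ one has
$$\int_{\Delta}\|x_c-y\|^{-\kappa\alpha}\,dy \;>\; \int_{\Delta}\|x_b-y\|^{-\kappa\alpha}\,dy,$$
because the singular kernel $\|\cdot\|^{-\kappa\alpha}$ is a strictly decreasing, strictly convex radial function and $x_c$ is ``more central'': rigorously, one can write $x_b = x_c + w$ and compare $\int_\Delta \|z\|^{-\kappa\alpha}dz$ over the translates $\Delta-x_c$ and $\Delta-x_b=(\Delta-x_c)-w$, then use a symmetrization/reflection argument — reflecting the part of $\Delta-x_b$ lying in a half-space across the hyperplane bisecting the segment from $0$ to $-w$ — to show the translate centered at the interior point captures strictly more mass near the origin where the kernel blows up. Integrating this strict inequality over the (positive) Crofton weight on $\partial\Delta$ gives $\lim_{t\to0^+}\big(M^{+}(t)-M^{-}(t)\big)<0$, hence $\frac{d}{dt}M(t)<0$ near $0$, so $M$ is non-constant and the increments are non-stationary.

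The step I expect to be the main obstacle is the rigorous justification of the geometric inequality for an \emph{arbitrary} (possibly non-smooth, non-convex) simply connected compact $\Delta$ containing the origin — in particular making the reflection/symmetrization argument work without convexity, and controlling the integrability of the singular kernel uniformly as $t\to0^+$ (this is where $\alpha<n/\kappa$, equivalently $\kappa\alpha<n$, is used: the potential kernel is locally integrable on $\mathbb{R}^n$, so all the integrals above are finite and the limits may be passed inside). A secondary technical point is verifying the hypotheses of Crofton's formula (twice continuous differentiability of the graph $\Gamma$) for the shell; one may need to argue first for smooth $\Delta$ and then remove the smoothness assumption by approximation, or alternatively bypass Crofton entirely and differentiate $M(t)$ directly using the coarea formula — but the geometric inequality at $t\to 0^+$ remains the crux either way.
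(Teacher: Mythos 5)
Your reduction via Lemma~\ref{lemma2} to the non-constancy of $M(t)$ is the same starting point as the paper's, but the route you take from there has a genuine gap, and it is precisely the gap the authors flag when they write that the analysis of $M^{+}(t,h)-M^{-}(t,h)$ ``is not straightforward for general domains.'' Your crux is the claimed inequality $\int_{\Delta}\|x_c-y\|^{-\kappa\alpha}dy>\int_{\Delta}\|x_b-y\|^{-\kappa\alpha}dy$ for the homothety center $x_c$ versus an arbitrary boundary point $x_b$. This is false for general simply connected compact $\Delta$: the left side is a Riesz potential, and it is not maximized at interior points of non-convex or elongated domains. For instance, if $\Delta$ consists of a large ball far from the origin joined to the origin by a thin corridor, the potential at the origin (interior) is small while the potential at a boundary point of the large ball is large, so the inequality reverses. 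Worse, since the sign of $\lim_{t\to0^+}\big(M^{+}-M^{-}\big)$ can be negative for a disk and positive for such a dumbbell, a continuity/intermediate-value argument along a family of domains interpolating between the two produces a domain where this limit is exactly zero — so the ``derivative at $t=0$'' criterion cannot establish the theorem for \emph{all} $\Delta$, no matter how the symmetrization is repaired. The reflection argument you sketch genuinely requires the ball geometry of Example~\ref{example1} and Remark~\ref{rem2}; it does not survive the passage to arbitrary $\Delta$, and the Crofton smoothness hypotheses are an additional unresolved issue.

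The paper's actual proof avoids all of this with a global scaling argument by contradiction. Assuming $I(t,h)=\iint_{(\Delta((t+h)^{1/n})\setminus\Delta(t^{1/n}))^2}\|x-y\|^{-\kappa\alpha}\,dx\,dy$ is constant in $t$, the homothety scaling $I(t,h)=(t+h)^{2-\kappa\alpha/n}I\big(\tfrac{t}{t+h},\tfrac{h}{t+h}\big)$ plus the assumed constancy forces $I(t,h)=h^{2-\kappa\alpha/n}\iint_{\Delta(1)^2}\|x-y\|^{-\kappa\alpha}dxdy$ for all $t\ge0$, so one may let $t\to\infty$. A small ball $B_n(\delta)\subset\Delta(1)$ gives the lower bound $I(0,h)\ge Ch^{2-\kappa\alpha/n}$. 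For large $t$ the shell has fixed volume $h|\Delta(1)|$ but becomes thin, so splitting the kernel at radius $C=h^{1/n-\varepsilon/(\kappa\alpha)}$ yields $I(t,h)\le h^{2-\kappa\alpha/n+\varepsilon}|\Delta(1)|^2$; taking $h$ small contradicts the lower bound. This argument needs no boundary regularity, no convexity, and no pointwise comparison of potentials — which is why it succeeds where the Crofton route stalls. If you want to salvage your approach, you would have to replace the pointwise geometric inequality by something that holds for every admissible $\Delta$, and the counterexamples above indicate there is no such local statement at $t=0$.
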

\begin{proof} 

By Lemma \ref{lemma2} and \eqref{eq1} 
$$Var(Y(t+h)-Y(t)) = \frac{\kappa!}{c_{2}(n,\kappa,\alpha,\Delta)}\iint\displaylimits_{\big(\Delta((t+h)^{1/n})\setminus \Delta(t^{1/n})\big)^2}||x-y||^{-\kappa\alpha}dxdy.$$

We prove that $Var(Y(t+h)-Y(t))$ is not constant in $t$ by a contradiction showing that the function   
$$I(t,h) = \iint\displaylimits_{\big(\Delta((t+h)^{1/n})\setminus \Delta(t^{1/n})\big)^2}||x-y||^{-\kappa\alpha}dxdy$$ is not constant with respect to $t.$ 

Let $I(t,h)$ do not depend on $t,$ i.e. $I(t,h) \equiv const(h).$ The process $Y(t)$ is defined on $[0,1],$ but without loss of generality we can consider $I(t,h)$ for $t\geq0.$  Indeed, by changing variables one gets
$$I(t,h)= (t+h)^{2-\kappa\alpha/n} \iint\displaylimits_{\big(\Delta(1)\setminus \Delta((\frac{t}{t+h})^{1/n})\big)^2}||x-y||^{-\kappa\alpha}dxdy$$ 
$$ = (t+h)^{2-\kappa\alpha/n} \iint\displaylimits_{\big(\Delta((1-\frac{t}{t+h})^{1/n}))\setminus\Delta(0)\big)^2}||x-y||^{-\kappa\alpha}dxdy = h^{2-\kappa\alpha/n} \iint\displaylimits_{\big(\Delta(1)\big)^2}||x-y||^{-\kappa\alpha}dxdy,$$ where the last equality follows from $I(t,h) \equiv const(h), \ t \in [0,1].$

Let $A_{t,h} = \Delta\big((t+h)^{1/n}\big)\setminus\Delta\big(t^{1/n}\big).$ Then we can change the variables in $I(t,h)$ as
$$I(t,h) = \iint\displaylimits_{A_{t,h}^2}||x-y||^{-\kappa\alpha}dxdy=\int_{\mathbb{R}^n}\int_{\mathbb{R}^n}I_{A_{t,h}}(x)I_{A_{t,h}}(y)||x-y||^{-\kappa\alpha}dxdy$$  
$$ = \int_{\mathbb{R}^n}I_{A_{t,h}}(y)\int_{\mathbb{R}^n}||x||^{-\kappa\alpha}I_{A_{t,h}}(x+y)dxdy,$$ where $I_A(\cdot)$ is a characteristic function of a set $A.$

Note, that as the origin is an interior point of $\Delta(1)$ there is a ball $B_n(\delta)$ of a radius $\delta>0$ that belongs to $\Delta(1),$ i.e. $B_n(\delta) \subset \Delta(1).$

For $t=0$ we obtain the following lower bound on $I(t,h)$ as
$$I(0,h)\geq \int_{\mathbb{R}^n}I_{B_n(\delta h^{1/n})}(y)\int_{\mathbb{R}^n}||x||^{-\kappa\alpha}I_{B_n(\delta h^{1/n})}(x+y)dxdy =\int_{\mathbb{R}^n}I_{B_n(\delta h^{1/n})}(y)$$
$$\times\int_{B_n(\delta h^{1/n}) - y}\hspace{-1mm}||x||^{-\kappa\alpha}dxdy\geq \frac{1}{2^n} \int_{\mathbb{R}^n} I_{B_n(\delta h^{1/n})}(y) \int_{B_n(\delta h^{1/n})}\hspace{-1mm}||x||^{-\kappa\alpha} dx dy,$$ as for any $y\in B_n(\delta h^{1/n})$   the shifted ball $B_n(\delta h^{1/n})-y$ always contains at least $2^{-n}$ of the original $B(\delta h^{1/n})$.

Hence, using the spherical change of coordinates we get
\begin{equation} \label{eq2}
I(0,h)\geq \frac{1}{2^n}|B_n(\delta h^{1/n})| \int_{0}^{\delta h^{1/n}} r^{n-1-\kappa\alpha}dr = Ch^{2-\frac{\kappa\alpha}{n}},
\end{equation}  where $C$ is a constant that does not depend on $h.$

Now we obtain the upper bound on $I(t,h).$ For any $C>0$
$$I(t,h) = \int_{\mathbb{R}^n}I_{A_{t,h}}(y)\bigg[ \int\displaylimits_{(A_{t,h}-y) \cap B_n(C)}||x||^{-\kappa\alpha}dx + \int\displaylimits_{(A_{t,h}-y) \cap \overline{B_n(C)}} ||x||^{-\kappa\alpha}dx\bigg]dy,$$ where $\overline{B_n(C)} = \mathbb{R}^n\setminus B_n(C).$

As for each $h$ and $y$ the volume $|(A_{t,h}-y) \cap B_n(C)| \to 0,$ when  $t\to \infty,$ we get
$$\int_{\mathbb{R}^n}I_{A_{t,h}}(y) \int\displaylimits_{(A_{t,h}-y) \cap B_n(C)} ||x||^{-\kappa\alpha}dxdy\to 0.$$

The second integral can be estimated as 
$$\int\displaylimits_{(A_{t,h}-y) \cap \overline{B_n(C)}} ||x||^{-\kappa\alpha}dx\leq C^{-\kappa\alpha}|(A_{t,h}-y)\cap \overline{B_n(C)}| \to C^{-\kappa\alpha}h|\Delta(1)|,\ \mbox{when}\ t \to \infty.$$

Let us choose $C = h^{\frac{1}{n}-\frac{\varepsilon}{\kappa\alpha}},$ where $\varepsilon\in (0,\frac{\kappa\alpha}{n}).$ Then 
\begin{equation} \label{eq3}
\int_{\mathbb{R}^n}I_{A_{t,h}}(y)\int_{(A_{t,h}-y) \cap \overline{B_n(C)}}||x||^{-\kappa\alpha}dxdy \to h^{2-\frac{\kappa\alpha}{n}+\varepsilon}|\Delta(1)|^2,\ t \to \infty.
\end{equation}  

Comparing \eqref{eq2} and \eqref{eq3}, we get that for sufficiently large $t$ it holds
$$I(t,h)\leq h^{2-\frac{\kappa\alpha}{n}+\varepsilon}|\Delta(1)|^2$$ and
$$Ch^{2-\frac{\kappa\alpha}{n}}\leq I(0,h) = I(t,h) \leq |\Delta(1)|^2h^{2-\frac{\kappa\alpha}{n}+\varepsilon}.$$
As $h$ can be selected arbitrary small, we get a contradiction.
\end{proof}

\section{ Numerical example}

This section presents three numerical examples showing the variances of increments $Y(s+h)-Y(s)$ in one and two-dimensional cases. We consider the most common cases in the literature, when $\Delta(1)$ is a one-dimensional interval [0,1], a two-dimensional disk and a square, and the Hermite rank $\kappa=1$. R code used for the numerical example is available in the folder "Research materials'' from \url{https://sites.google.com/site/olenkoandriy/}

For numerical calculations, the two-dimensional integrals   
$$E(Y(s+h)-Y(s))^2 = C\int_{\mathbb{R}^{2}} ||\lambda||^{(\alpha-2)}\left(\,\int\limits_{\Delta((s+h)^{1/2})\setminus \Delta(s^{1/2})}e^{i(\lambda,x)}\right)^2dx d\lambda$$  were approximated by the sums of the form
\begin{equation}
\label{eq5}
\sum_{i=0}^{m-1}\sum_{j=0}^{m-1}\mathscr{F}^2\bigg(\lambda_i^{(1)}, \lambda_j^{(2)}\bigg)\big((\lambda_i^{(1)})^2+(\lambda_j^{(2)})^2\big)^\frac{\alpha-2}{2}\big(\lambda_{i+1}^{(1)}-\lambda_{i}^{(1)}\big)\big(\lambda_{j+1}^{(2)}-\lambda_{j}^{(2)}\big),
\end{equation} where  $\mathscr{F}$ denotes the $2\mbox{-}$dimensional Fourier transformation of the indicator of the set $\Delta((s+h)^{1/2})\setminus{\Delta(s^{1/2})}$ and $(\lambda_i^{(1)},\lambda_j^{(2)}), \ i,j =0,...,m,$ form a grid of $m^2$ equidistant points in~$\mathbb{R}^2.$ 

The Fourier transformations of indicators of the disk and the square have explicit forms in terms of elementary functions that allow easy computations of the sums~\eqref{eq5}.

For the difference of disks with radiuses $(s+h)^{1/2}$ and $s^{1/2}$ the Fourier transform is
$$\frac{J_{1}(||\lambda||(s+h)^{1/2})-J_{1}(||\lambda||s^{1/2})}{||\lambda||^{1/2}},$$
where $J_{1}(\cdot)$ is the Bessel's function of the first kind of order 1.

For the difference of squares $\{(x_1,x_2) : |x_i| \leq (s+h)^{1/2}, i=1,2 \}$ and $\{(x_1,x_2) : |x_i| \leq s^{1/2} , i=1,2\}$ the Fourier transform of its indicator is
$$\frac{\sin((s+h)^{1/2}\lambda^{(1)})\sin((s+h)^{1/2}\lambda^{(2)})-\sin(s^{1/2}\lambda^{(1)})\sin(s^{1/2}\lambda^{(2)})}{\lambda^{(1)}\lambda^{(2)}}.$$

Figure \ref{fig:var} shows  variances of $Y(s+0.02)-Y(s)$ with the $s$-step $0.02$ for the following observation windows:
one-dimensional interval, two-dimensional disk and square.
\begin{figure}[h!]
  \centering
  \includegraphics[width=0.7\linewidth,trim={0 5mm 0 26mm},clip]{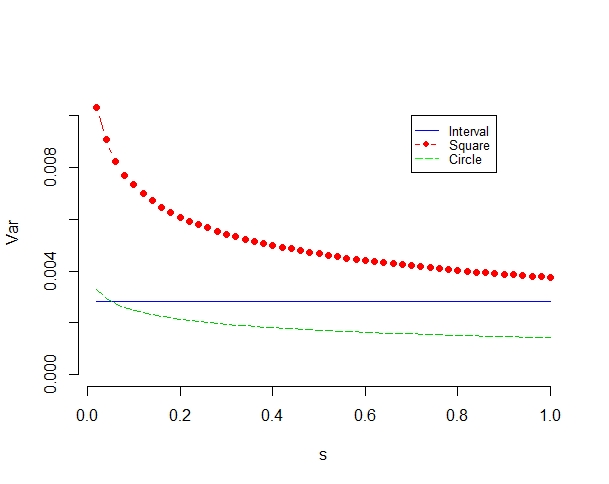}\vspace{-4mm}
  \caption{$Var(Y(s+0.02)-Y(s))$ for different observation windows.}
  \label{fig:var}
\end{figure}

The numerical example show that for the case $n=2$ variances of the increments of the limit process are not equal for different sets $\Delta$ and decreases when $s$ increases, while in the one-dimensional case $n=1$ variances of the increments are constant.
\section{ Conclusions and future studies}
It was shown that contrary to the classical one-dimensional case of stochastic processes, for any choice of an observation window,  integral functionals of nonlinear transformations of long-range dependent random fields converge to the generalized Hermite-type process with non-stationary increments.

In the future studies, it would be interesting to investigate:

- the case of weighted integral functionals, see \cite{ alodat2020, anh2019lse,  ivanov1989statistical};

- the case of filtered random fields, see \cite{alodat2019limit};

- whether there exists a normalization  dependent on $t$ that can result in a limit process with stationary increments;

- other random processes and fields that require using $\iint_{\Delta^2} g(||x-y||)dxdy$ instead of $\iint_{\Delta^2}||x-y||^{-\kappa\alpha}dxdy$ for some suitable functions $g(\cdot)$;

- application of the approaches to integrals of generalized positive-definite functions \cite{gaal2018integral, phillips2019extension} and average distances \cite{baddeley1977integrals, nickolas2011distance}.\\

\noindent\textbf{Acknowledgement} A.Olenko was partially supported under the Australian Research Council Discovery Projects funding scheme (project number DP160101366). The authors are also grateful to the  anonymous referees for their suggestions that helped to improve the style of the paper.

\providecommand{\bysame}{\leavevmode\hbox to3em{\hrulefill}\thinspace}
\providecommand{\MR}{\relax\ifhmode\unskip\space\fi MR }
\providecommand{\MRhref}[2]{%
  \href{http://www.ams.org/mathscinet-getitem?mr=#1}{#2}
}
\providecommand{\href}[2]{#2}

\end{document}